\pdfoutput=1
\RequirePackage{ifpdf}
\ifpdf 
\documentclass[pdftex]{sigma}
\else
\documentclass{sigma}
\fi

\numberwithin{equation}{section}

\newtheorem{Theorem}{Theorem}[section]
\newtheorem*{Theorem*}{Theorem}
\newtheorem{Corollary}[Theorem]{Corollary}

\newtheorem{Proposition}[Theorem]{Proposition}
 { \theoremstyle{definition}

 }

\begin{document}

\newcommand{\arXivNumber}{2401.03600}

\renewcommand{\PaperNumber}{034}

\FirstPageHeading

\ShortArticleName{A Test of a Conjecture of Cardy}

\ArticleName{A Test of a Conjecture of Cardy}

\Author{Van HIGGS and Doug PICKRELL}
\AuthorNameForHeading{V.~Higgs and D.~Pickrell}
\Address{Mathematics Department, University of Arizona, Tucson AZ 85721, USA}
\Email{\href{mailto:vanh1@arizona.edu}{vanh1@arizona.edu}, \href{mailto:pickrell@arizona.edu}{pickrell@arizona.edu}}

\ArticleDates{Received January 25, 2025, in final form May 04, 2025; Published online May 09, 2025}

\Abstract{In reference to Werner's measure on self-avoiding loops on Riemann surfaces, Cardy conjectured a formula for the measure of all homotopically nontrivial loops in a~finite type annular region with modular parameter~$\rho$. Ang, Remy and Sun have announced a~proof of this conjecture using random conformal geometry. Cardy's formula implies that the measure of the set of homotopically nontrivial loops in the punctured plane which intersect~$S^1$ equals \smash{$\frac{2\pi}{\sqrt{3}}$}. This set is the disjoint union of the set of loops which avoid a ray from the unit circle to infinity and its complement. There is an inclusion/exclusion sum which, in a limit, calculates the measure of the set of loops which avoid a ray. Each term in the sum involves finding the transfinite diameter of a slit domain. This is numerically accessible using the remarkable Schwarz--Christoffel package developed by Driscoll and Trefethen. Our calculations suggest this sum is around $\pi$, consistent with Cardy's formula.}

\Keywords{Werner measure; Cardy conjecture; transfinite diameter; Schwarz--Christoffel}

\Classification{60J67; 30C20; 65E10}

\section{Introduction}

Given a topological space $S$, let ${\rm Comp}(S)$ denote the set of all
compact subsets of $S$ with the Vietoris topology (see \cite{Michael}), and let
\[
{\rm Loop}(S):=\bigl\{\gamma\in {\rm Comp}(S)\mid \gamma\text{ is homeomorphic to } S^1\bigr\}
\]
with the induced topology. Suppose that
for each Riemann surface $S$, $\mu_S$ is a positive Borel measure
on ${\rm Loop}(S)$. This family satisfies conformal restriction if for each conformal embedding
$S_1\to S_2$, the restriction of $\mu_{S_2}$ to ${\rm Loop}(S_1)$ equals
$\mu_{S_1}$. In \cite{W}, Werner proved the
following remarkable result.

\begin{Theorem}
There exists a nontrivial family of locally finite measures
$\{\mu_S\}$ on self-avoiding loops on Riemann surfaces which
satisfies conformal restriction. This family is unique up to
multiplication by an overall positive constant.
\end{Theorem}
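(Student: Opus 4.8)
The plan is to treat existence and uniqueness separately, anchoring existence in the Brownian loop measure and uniqueness in the rigidity forced by conformal restriction. For existence, I would start from the Brownian loop measure of Lawler--Werner on the plane, which is already conformally invariant and, by its very construction (the loops living in a subdomain are exactly those that never leave it), satisfies a restriction property. To each Brownian loop associate its outer boundary, the frontier of the unbounded component of its complement; this frontier is almost surely a self-avoiding loop. Define $\mu_{\mathbb{C}}$ to be the pushforward of the Brownian loop measure under this outer-boundary map. For a general Riemann surface $S$ the Brownian loop measure is again available and conformally natural, so the same outer-boundary construction produces a candidate $\mu_S$. The restriction property for $\{\mu_S\}$ should then be inherited from that of the Brownian loop measure: for a contractible loop in an embedded $S_1\hookrightarrow S_2$ the relevant Brownian loops are confined to the disk it bounds, so restricting $\mu_{S_2}$ to $\mathrm{Loop}(S_1)$ reproduces $\mu_{S_1}$.

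For uniqueness, applying restriction to conformal isomorphisms yields full conformal invariance, so $\mu_S$ on every simply connected surface is determined by $\mu_{\mathbb{D}}$, and the entire family is governed by the measure on the Riemann sphere; it therefore suffices to show the latter is unique up to scale. The heart of the matter is that restriction forces strong rigidity on how a loop meets a boundary: viewing a loop from one of the two complementary disks it bounds and conditioning on its shape there, the restriction property should make the enclosed hull a one-sided restriction sample in the sense of Lawler--Schramm--Werner. Those restriction measures are classified by a single exponent, with avoidance probabilities given by explicit powers of conformal derivatives, and the requirement that the loop be self-avoiding (so that both complementary sides see simple curves) selects the unique exponent corresponding to $\mathrm{SLE}_{8/3}$. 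This pins down the conditional law of the loop, and hence the measure, up to the overall multiplicative constant.

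I expect the main obstacle to be precisely this rigidity step rather than existence: one must show rigorously that conformal restriction for a loop measure really does induce one-sided restriction samples, and that two-sided consistency eliminates all functional freedom beyond the scale, while controlling local finiteness to exclude spurious atomic or singular components. A secondary difficulty is passing from the sphere to surfaces of higher genus, where a self-avoiding loop may be homotopically nontrivial and bound no disk, so the planar enclosure argument used for existence must be replaced by purely local charts together with repeated application of the restriction axiom.
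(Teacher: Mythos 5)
The paper does not prove this statement at all: it is Werner's theorem, quoted from \cite{W}, so the only meaningful benchmark is Werner's own argument --- whose architecture (existence via outer boundaries of Brownian loops, $\mathrm{SLE}_{8/3}$ restriction theory, uniqueness up to a multiplicative constant) your proposal mirrors rather than replaces. Measured against that, there is a genuine gap, and it sits exactly where you wrote ``should.'' Your inheritance argument for the restriction property works only for simply connected targets: if the outer boundary $\eta$ of a Brownian loop lies in a simply connected $S_1$, then the filling of $\eta$, hence the Brownian loop itself, lies in $S_1$, and restriction passes through the pushforward. But the theorem requires restriction for arbitrary conformal embeddings $S_1\to S_2$, in particular for $S_1$ an annulus and $\eta$ non-contractible in $S_1$. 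There the filling of $\eta$ need not lie in $S_1$, the Brownian loops whose outer boundary is $\eta$ are not confined to $S_1$, and conformal invariance of the Brownian loop measure no longer transports outer boundaries (the embedding is not even defined on the filling). This multiply connected case is not the ``secondary difficulty'' you relegate to higher genus; it is the central difficulty of Werner's proof, it is what the $\mathrm{SLE}_{8/3}$ input is actually needed for, and it is precisely what makes a chart-by-chart definition of $\mu_S$ on a general surface consistent, since a self-avoiding loop on $S$ only sits inside an embedded annulus. For the same reason your proposed fix --- ``purely local charts together with repeated application of the restriction axiom'' --- is circular: the restriction axiom for annular embeddings is the statement to be proved.

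On uniqueness you have the difficulty inverted. In Werner's argument uniqueness is the soft half: conformal restriction plus invariance force the quantity $\mu_0\bigl({\rm Loop}^1(V\setminus\{0\})\setminus {\rm Loop}(U)\bigr)$ to be additive under composition of maps $(U,0)\to(V,0)$, hence equal to $c\log\bigl(\bigl|\phi'(0)\bigr|\bigr)$ for some constant $c$ (this is the content of the paper's Theorem~\ref{Werner}), and such quantities determine the whole family; no Lawler--Schramm--Werner classification is needed. Your route --- conditioning the loop on its shape as seen from one side and identifying a one-sided restriction sample --- might be completable, but it carries two burdens you do not discharge: conditioning an infinite ($\sigma$-finite) loop measure and extracting a genuine restriction law is itself a nontrivial step, and simplicity of the curve does not select a unique exponent among \emph{one-sided} restriction measures, since a continuum of exponents there is realized by simple curves; the exponent $5/8$, i.e., $\mathrm{SLE}_{8/3}$, is singled out only by the \emph{two-sided} classification.
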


Any self-avoiding loop on a Riemann surface is
contained in a finite type annulus $A$, and in turn $A$ is conformally equivalent to a unique
planar annulus $\{1<\vert z\vert<{\rm e}^{\rho}\}\subset \mathbb C^{\times}$, the punctured plane, for some $\rho>0$. Consequently the family $\{\mu_S\}$ is uniquely determined by
\[\mu_0:=\mu\vert_{{\rm Loop}^1(\mathbb C^{\times})}\] the restriction of $\mu$
to loops in the plane which surround $0$. The measure $\mu_0$ is determined by the following formula
of Werner, which implicitly normalizes the family of measures (see \mbox{\cite[Proposition~3]{W}}).

\begin{Theorem} \label{Werner} Suppose that $U$ and $V$ are simply connected
domains such that $0\in U\subset V\subsetneq \mathbb C$. Then
\[
\mu_0\bigl({\rm Loop}^1(V\setminus\{0\})\setminus {\rm Loop}(U)\bigr)=\log\bigl(\bigl\vert\phi'(0)\bigr\vert\bigr),
\]
where $\phi\colon (U,0)\to(V,0)$ is a conformal isomorphism \big(${\rm Loop}^1$ means we consider loops which surround zero\big).
\end{Theorem}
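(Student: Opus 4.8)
Write $m(U,V):=\mu_0\big({\rm Loop}^1(V\setminus\{0\})\setminus {\rm Loop}(U)\big)$ for the quantity to be computed. The plan is to isolate two structural features of $\mu_0$, namely conformal invariance and additivity, use them to reduce the statement to a one-parameter computation on round disks, and then to invoke Werner's explicit construction to pin down both the dependence on the shape of the domain and the overall normalizing constant.

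First I would upgrade the conformal restriction property recalled above to honest conformal invariance of $\mu_0$. If $\phi\colon A\to B$ is a conformal isomorphism of planar domains, then applying restriction along $A\hookrightarrow\mathbb C$, along the isomorphism $\phi$ (an embedding in both directions), and along $B\hookrightarrow\mathbb C$ in turn gives $\mu(\phi(E))=\mu(E)$ for every measurable $E\subset{\rm Loop}(A)$; when $\phi(0)=0$ this descends to $\mu_0$ on loops surrounding $0$. Applying this to the Riemann map $f\colon V\to\mathbb D$ with $f(0)=0$ (where $\mathbb D$ is the unit disk) yields $m(U,V)=m\big(f(U),\mathbb D\big)$, and a chain-rule computation shows $|\phi'(0)|$ is unchanged under this reduction. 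Thus it suffices to treat $V=\mathbb D$ and to prove $m(U',\mathbb D)=-\log r$, where $U'=f(U)$ and $r$ is the conformal radius of $U'$ at $0$.

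Second I would record additivity: for $0\in U\subset W\subset V$ the set ${\rm Loop}^1(V\setminus\{0\})\setminus {\rm Loop}(U)$ is the disjoint union of ${\rm Loop}^1(W\setminus\{0\})\setminus {\rm Loop}(U)$ and ${\rm Loop}^1(V\setminus\{0\})\setminus {\rm Loop}(W)$, since a loop surrounding $0$ and not contained in $U$ is classified by whether or not it is contained in $W$. Hence $m(U,V)=m(U,W)+m(W,V)$. Specializing to concentric round disks and using scale invariance, the function $F(\rho):=m\big(\mathbb D,{\rm e}^{\rho}\mathbb D\big)$ satisfies $F(\rho_1+\rho_2)=F(\rho_1)+F(\rho_2)$ and is monotone, so $F(\rho)=c\rho$; this already proves the theorem for the round case $U'=r\mathbb D$ up to the constant $c$, and matching conformal radii forces the target value to be $c=1$ there.

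The remaining, and genuinely hard, steps are to show that $m(U',\mathbb D)$ depends on $U'$ only through its conformal radius and to evaluate $c=1$ in general. The round case does not formally force the first point, since two simply connected domains of equal conformal radius need not be related by a conformal automorphism of $\mathbb D$ fixing $0$, and Koebe distortion with monotonicity only bounds $m$ rather than determining it. This is where Werner's explicit realization enters: the relevant loops arise as outer boundaries of SLE$_{8/3}$/Brownian-type excursions, and the mass of loops that disconnect $0$ from $\partial\mathbb D$ transforms under conformal maps with the appropriate restriction exponent, so that passing to the loop (``bubble'') measure converts the multiplicative covariance $|\phi'(0)|^{\alpha}$ into the additive $\log|\phi'(0)|$, with the chosen normalization of the family making $c=1$. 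I expect this last identification, that the disconnection mass equals $-\log r$ for \emph{every} simply connected $U'$ and not merely for disks, to be the main obstacle, and the precise point at which one must use properties of $\mu_0$ beyond conformal restriction and additivity.
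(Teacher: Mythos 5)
First, a point of comparison: the paper does not prove this theorem at all. It is quoted from Werner's paper (see \cite[Proposition~3]{W}), and the surrounding text makes clear that the formula is read as the \emph{normalization} of the family $\{\mu_S\}$, which by Werner's uniqueness theorem is otherwise determined only up to a positive constant. So your proposal cannot be compared with an internal argument; it has to stand on its own, and as written it does not --- although it is closer to working than you yourself claim.

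Your structural reductions are correct: applying conformal restriction in both directions along a conformal isomorphism does give conformal invariance of $\mu_0$; the partition of ${\rm Loop}^1(V\setminus\{0\})\setminus{\rm Loop}(U)$ according to containment in an intermediate $W$ gives $m(U,V)=m(U,W)+m(W,V)$; and scale invariance, monotonicity and the Cauchy functional equation give $F(\rho)=c\rho$ for concentric disks. One caveat you pass over silently: this last step needs $F(\rho)<\infty$, which does not follow from local finiteness of $\mu$ alone, since the relevant set of loops is not compact in ${\rm Loop}(\mathbb C^{\times})$ (loops surrounding $0$ may accumulate at the puncture); this finiteness is itself part of what Werner proves from his construction.

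The genuine gap is in your final paragraph, and you misdiagnose where it lies. (i) You assert that the dependence of $m(U',\mathbb D)$ on $U'$ only through its conformal radius ``does not formally follow'' from invariance, additivity and the round case, and must come from Werner's explicit realization. That is false: it follows from exactly the tools you already have. For small $\epsilon$ write, by additivity, $c\log(1/\epsilon)=m(\epsilon\mathbb D,\mathbb D)=m(\epsilon\mathbb D,U')+m(U',\mathbb D)$; by conformal invariance $m(\epsilon\mathbb D,U')=m\bigl(g(\epsilon\mathbb D),\mathbb D\bigr)$, where $g\colon (U',0)\to(\mathbb D,0)$ is the Riemann map, with $|g'(0)|=1/r$; by the growth theorem $g(\epsilon\mathbb D)$ is squeezed between round disks of radii $(\epsilon/r)\bigl(1\pm O(\epsilon)\bigr)$, so monotonicity and the round case force $m\bigl(g(\epsilon\mathbb D),\mathbb D\bigr)=c\log(r/\epsilon)+O(\epsilon)$. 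Subtracting and letting $\epsilon\to 0$ gives $m(U',\mathbb D)=c\log(1/r)$ exactly. (ii) By contrast, the constant $c=1$ is provably \emph{not} accessible by any argument of this type: if $\mu$ satisfies conformal restriction so does $\lambda\mu$ for every $\lambda>0$, which rescales $c$; hence restriction plus additivity can never pin it down. Either one reads the theorem, as this paper explicitly does, as the convention that fixes the normalization of the family, or one computes $c$ for a concrete realization (Werner's Brownian-loop/${\rm SLE}_{8/3}$ construction) --- and your appeal to ``restriction exponents'' names that computation without performing it. In short: your reductions together with the argument in (i) prove the formula with an undetermined constant $c>0$ (modulo the finiteness caveat), and the residual content of the theorem is pure normalization, not a conformal-geometry problem about general domains.
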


As in \cite[Section 7]{W}, consider the function
\[F_0(\rho):=\mu_0\bigl({\rm Loop}^1(\{1<\vert z\vert<{\rm e}^{\rho}\}\bigr)\]
the measure of the set of loops in the annulus which surround zero.
Cardy (see \cite{Cardy}) conjectured an exact formula
\begin{equation}\label{cardyformula}
F_0(\rho)=6\pi\frac{\sum_{k\in\mathbb Z}(-1)^{k-1}kq^{3k^2/2-k+1/8}}{\prod_{k=1}^{\infty}\bigl(1-q^k\bigr)},\qquad q=\exp\bigl(-2\pi^2/\rho\bigr).
\end{equation}

Ang, Remy and Sun have announced a proof of Cardy's formula in \cite{ARS}, using completely new ideas from random conformal geometry. Our main purpose is to note a corollary of the Cardy formula, and to use this and Theorem \ref{Werner} to numerically test Cardy's formula.

A straightforward application of Poisson summation shows that Cardy's conjecture implies~that
\begin{equation*}
F_0(\rho)=\rho-\frac{2\pi}{\sqrt{3}}+o\biggl(\frac{1}{\rho}\biggr) \qquad \text{as} \ \rho\to\infty.
\end{equation*}
The leading term is correct (we will discuss this and how it is related to the normalization of~Werner's measure
in Section \ref{normalization}). It is striking that there is not a log term in the expansion. The~basic question is whether we can use this and the constant term to
test Cardy's conjecture.

Let $\Delta$ ($\Delta^*$) denote the open unit disk centered at zero ($\infty$, respectively). Theorem \ref{Werner} implies
\[\mu_0\bigl(\bigl\{\gamma\in {\rm Loop}^1(({\rm e}^{\rho}\Delta)^{\times})\mid \gamma \not\subset \Delta\bigr\}\bigr)=\rho. \]
Therefore,
\[\rho-F_0(\rho)=\mu_0\bigl(\bigl\{\gamma\in {\rm Loop}^1(({\rm e}^{\rho}\Delta)^{\times})\mid \gamma \not\subset \Delta \ \text{and}\ \gamma \not\subset \{1<|z|<{\rm e}^{\rho}\}\bigr\}\bigr).\]
By taking the limit as $\rho\to\infty$, Cardy's conjecture implies that
\begin{align}
\frac{2 \pi}{\sqrt{3}}&{}=\mu_0\bigl(\bigl\{\gamma\in {\rm Loop}^1(\mathbb C^{\times})\mid \gamma \not\subset \Delta,\gamma \not\subset \Delta^*\bigr\}\bigr)\nonumber\\
&{}=\mu_0\bigl(\bigl\{\gamma\in {\rm Loop}^1(\mathbb C^{\times})\mid \gamma \cap S^1 \ne \phi\bigr\}\bigr) \nonumber\\
&{}=\mu_0\biggl(\bigcup_{\eta}\bigl\{\gamma\in {\rm Loop}^1(\mathbb C^{\times}\setminus \eta)\mid \gamma \not\subset \Delta\bigr\}\biggr),\label{formula1}
\end{align}
where the union is over smooth curves $\eta$ connecting a point in $S^1$ to $\infty$. Together with conformal invariance this implies the following corollary.

\begin{Corollary}\label{cardycor}Assuming Cardy's formula, for any circle $C_r:=\{|z|=r\}$
\[\mu_0\bigl(\bigl\{\gamma\in {\rm Loop}^1(\mathbb C^{\times})\mid \gamma\cap C_r\ne \phi\bigr\}\bigr)=\frac{ 2\pi}{\sqrt{3}}. \]
\end{Corollary}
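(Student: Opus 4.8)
The plan is to deduce the statement for an arbitrary radius $r>0$ from the already-established case $r=1$ by exploiting the conformal invariance of Werner's measure. The crucial observation is that the dilation $\sigma_r\colon z\mapsto rz$ is a conformal automorphism of the punctured plane $\mathbb{C}^{\times}$ which extends continuously so as to fix both $0$ and $\infty$; hence it carries loops surrounding $0$ to loops surrounding $0$, so it preserves the class ${\rm Loop}^1(\mathbb{C}^{\times})$, and it sends the unit circle $S^1$ onto the circle $C_r$.

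First I would record why conformal restriction forces $\mu_0$ to be $\sigma_r$-invariant. Applying the conformal restriction property to the conformal isomorphism $\sigma_r\colon \mathbb{C}^{\times}\to\mathbb{C}^{\times}$ gives $(\sigma_r)_*\mu_{\mathbb{C}^{\times}}=\mu_{\mathbb{C}^{\times}}$; restricting both sides to the $\sigma_r$-invariant Borel set ${\rm Loop}^1(\mathbb{C}^{\times})$ then yields $(\sigma_r)_*\mu_0=\mu_0$, i.e.\ $\mu_0\bigl(\sigma_r(B)\bigr)=\mu_0(B)$ for every Borel set $B$.

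Second, I would track the relevant event under $\sigma_r$. Since $\gamma\cap S^1\ne\phi$ if and only if $\sigma_r(\gamma)\cap\sigma_r(S^1)=\sigma_r(\gamma)\cap C_r\ne\phi$, the map $\gamma\mapsto\sigma_r(\gamma)$ carries $\bigl\{\gamma\in{\rm Loop}^1(\mathbb{C}^{\times})\mid \gamma\cap S^1\ne\phi\bigr\}$ bijectively onto $\bigl\{\gamma\in{\rm Loop}^1(\mathbb{C}^{\times})\mid \gamma\cap C_r\ne\phi\bigr\}$. Combining this with the invariance from the previous step and with the identity \eqref{formula1}, which supplies the value $\frac{2\pi}{\sqrt{3}}$ in the case $r=1$, gives the claimed equality for all $r>0$.

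The argument is essentially formal once the invariance is in hand, so there is no serious analytic obstacle. The one point requiring genuine care is the first step: one must be sure that conformal restriction really does promote to full invariance under conformal \emph{automorphisms} (and not merely under inclusions of subdomains), and that the dilation, although it extends across the punctures, acts as a bona fide self-map of the Riemann surface $\mathbb{C}^{\times}$ preserving the homotopy class of loops that surround the origin. Measurability of the maps and sets involved with respect to the Vietoris topology should be noted but is routine.
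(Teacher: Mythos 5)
Your proposal is correct and matches the paper's own argument: the paper establishes the $r=1$ case in \eqref{formula1} and then disposes of general $r$ with the single remark ``together with conformal invariance this implies the corollary,'' which is precisely the dilation-invariance argument you spell out. Your only addition is to make explicit that conformal restriction applied to the automorphism $\sigma_r\colon z\mapsto rz$ of $\mathbb{C}^{\times}$ yields $(\sigma_r)_*\mu_0=\mu_0$, a worthwhile clarification but not a different route.
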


The fact that this measure is finite suggests that similar measures
involving more general loops in place of circles, and more general free homotopy classes on compact surfaces, are also finite.

Returning to (\ref{formula1}), the union over $\eta$ can be partitioned into two pieces, the set of loops which avoid a ray from a point of the unit
circle to $\infty$, and the complement. Therefore, according to Cardy,
\[\frac{2 \pi}{\sqrt{3}}=\mu_0\biggl(\biggl\{\gamma\in \bigcup_{S^1} {\rm Loop}^1\bigl(\mathbb C^{\times}\setminus \mathbb R_{\ge 1}{\rm e}^{{\rm i}\theta}\bigr)\mid \gamma \not\subset \Delta\biggr\}\biggr)+\]
the measure of the set of loops which do not omit a ray from the circle to $\infty$. The measure of this latter set
is definitely positive, because this set has nonempty interior. It is not clear how to analytically evaluate this latter measure. We will mainly focus on Cardy's assertion that
\begin{gather}
\frac{2 \pi}{\sqrt{3}}>\mu_0\biggl(\biggl\{\gamma\in \bigcup_{S^1} {\rm Loop}^1\bigl(\mathbb C^{\times}\setminus \mathbb R_{\ge 1}{\rm e}^{{\rm i}\theta}\bigr)\mid\gamma \not\subset \Delta\biggr\}\biggr)
\nonumber \\ \hphantom{\frac{2 \pi}{\sqrt{3}}>}{}
=\lim_{n\to\infty}
\mu_0\biggl(\biggl\{\gamma\in \bigcup_{\{{\rm e}^{{\rm i}\theta}:{\rm e}^{{\rm i}n\theta}=1 \}} {\rm Loop}^1\bigl(\mathbb C^{\times}\setminus \mathbb R_{\ge 1}{\rm e}^{{\rm i}\theta}\bigr)\mid\gamma \not\subset \Delta\biggr\}\biggr).\label{formula3}
\end{gather}

\subsection{Plan of the paper}

In Section \ref{origin}, as a service to mathematicians, we will attempt to explain the origins of Cardy's conjecture, from a physics perspective. Readers should consult the remarkable paper \cite{ARS} for a~novel mathematical perspective on Cardy's conjecture and much more.

In Section \ref{I/E}, we discuss the problem of calculating
\begin{equation*}
\mu_0\biggl(\biggl\{\gamma\in \bigcup_{\{{\rm e}^{{\rm i}\theta}:{\rm e}^{{\rm i}n\theta}=1\}} {\rm Loop}^1\bigl(\mathbb C^{\times}\setminus \mathbb R_{\ge 1}{\rm e}^{{\rm i}\theta}\bigr)\mid\gamma \not\subset \Delta\biggr\}\biggr)
\end{equation*}
using the inclusion/exclusion principle. This leads to the problem of finding a formula for the transfinite diameter
for a slit domain. If $m$ is the number of slits, the cases $m=1,2$ are analytically tractable, but this is generally not so for $m\ge 3$.

In Section \ref{numerical}, we briefly report on our numerical work. Using the SC package of Driscoll and Trefethen (see \cite{DT}), we have found that the limit (\ref{formula3}) appears to be around $\pi$, consistent with Corollary \ref{cardycor}. This involves a long alternating sum that involves large numbers, hence we cannot make a definitive claim about the precise value of (\ref{formula3}).

In Section \ref{Sigma}, we will discuss an alternate way of thinking about the inequalities
\[\mu_0\bigl(\bigl\{\gamma\in {\rm Loop}^1(\mathbb C^{\times})\mid \gamma \cap S^1 \ne \phi\bigr\}\bigr)<\frac{ 2\pi}{\sqrt{3}}<\infty. \]
This finiteness turns out to be equivalent to the integrability of a standard height function on (a~completion of) universal Teichm\"uller space.
We also note that a conjecture of the second author in \cite{CP} is incompatible with Cardy's conjecture.

In the final section, we note that various normalizations of Werner's family of measures are all the same.

\section{An origin story for Cardy's conjecture}\label{origin}

This brief section is included simply as a literature guide for mathematicians.

In \cite[Section 7.4.6]{DMS}, there is a short but lucid description of the $O(n)$ loop model. The~standard~$O(n)$ model is the sigma model (on a space time of some dimension) with target $S^{n-1}$. In~two dimensions, the partition function for a cutoff version on a honeycomb lattice has the~form
\[Z_n(K)=\sum_{{\rm loops}} n^{N_L}K^{N_{B}}, \]
where the sum is over (disjoint) self-avoiding loop configurations on the lattice, $K$ is a coupling constant, $N_L$ is the number of loops, and $N_B$ is the number of bonds (i.e., edges, for all of the loops). This expression was arrived at independently in the study of polymers, as explained in Cardy's book~\cite{Cardy1}.
This expression for the partition function makes sense for any real number $n$. For any $-2\le n\le 2$, it is
possible to define a model on the lattice in a local way, so that the loops emerge as nonlocal observables. For this model, the $O(n)$ loop model, it is believed that
\begin{enumerate}\itemsep=0pt
\item[(1)] there exists a continuous phase transition at the critical value
\[K=K_c(n)=\bigl(2+\sqrt{2-n}\bigr)^{-1/2},\]
\item[(2)] there exists a continuum limit, and
\item[(3)] this continuum limit is a conformal field theory, where
(with $n=-2\cos(\pi g)$) the central charge is
\[c_n=1-6\frac{(g-1)^2}{g}. \]
\end{enumerate}
For special values of $n$, it is a minimal model.

Suppose that $n=0$, i.e., $c=0$. In this case, the partition function is trivial, $Z_0=1$. However, the derivative
\begin{equation*}
\frac{\partial}{\partial n}Z_n|_{n=0}=\sum_{\rm loops} N_L n^{N_L-1}K^{N_{B}}|_{n=0}
\end{equation*}
is a sum over single self-avoiding loops weighted according to their length. Random self-avoiding loops weighted according to their length (in an appropriate way, depending on the lattice) conjecturally converges to Werner's measure (see \cite[Section 7.1]{W}).

In \cite{Cardy}, for the continuum $O(n)$ model, Cardy derives an exact expression for the partition function for an annulus with free boundary conditions. This expression had been derived previously by other methods, by Saleur and Bauer, see \cite{SB}. The derivative with respect to $n$, at~${n=0}$, of this partition function
is the Cardy formula (\ref{cardyformula}). Cardy's conjecture is that this derivative is the Werner measure of homotopically nontrivial loops in the annulus.

Readers should see \cite{ARS} for a contemporary mathematical perspective, including a proof of the character formula.

\section{Applying inclusion/exclusion}\label{I/E}

Riemann mapping (or Schwarz--Christoffel) formulas for slit domains are well-known and deceptively simple (see \cite{DT}).

\begin{Proposition}
Fix angles
\[0\le \phi_1<\phi_2<\dots <\phi_m<2\pi.\]
The unique conformal isomorphism
\[\phi_+\colon \ (\Delta,0) \to \Biggl(\mathbb C \setminus\bigcup_{j=1}^m \mathbb R_{\ge 1}{\rm e}^{{\rm i}\phi_j},0\Biggr)\]
with $\phi_+'(0)>0$,
has the form
\[\phi_+(z)= \rho_0 z \prod_{j=1}^m \biggl(1-\frac{z}{z_j}\biggr)^{-\gamma_j},\]
where $\gamma_j \pi=\phi_j-\phi_{j-1}$, $j=1,\dots ,m-1$, $\sum_{j=1}^m\gamma_j=2$, $z_j\in S^1$,
and
\[\rho_0=\rho_0(\gamma_1,\dots ,\gamma_m)>0 .\]

Similarly, the unique conformal isomorphism
\[\phi_-\colon \ (\Delta^*,\infty) \to \Biggl(\widehat{\mathbb C} \setminus\bigcup_{j=1}^m \mathbb [0,1]{\rm e}^{{\rm i}\phi_j},\infty\Biggr)\]
with positive derivative at $\infty$, has the form
\[\phi_-(z)= \rho_{\infty} z \prod_{j=1}^m \biggl(1-\frac{z_j}{z}\biggr)^{\gamma_j},\]
where $\rho_{\infty}=1/\rho_0$ is the transfinite diameter for the compact set
$\bigcup_{j=1}^m \mathbb [0,1]{\rm e}^{{\rm i}\phi_j}$.
\end{Proposition}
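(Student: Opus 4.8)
The plan is to treat this as a Schwarz--Christoffel verification anchored by the Riemann mapping theorem. First I would record that the target $\Omega := \mathbb{C}\setminus\bigcup_{j=1}^m\mathbb{R}_{\ge1}e^{i\phi_j}$ is simply connected: its complement in $\widehat{\mathbb{C}}$ is the union of the $m$ rays together with $\infty$, which is connected because all the rays meet at $\infty$. Since $0\in\Omega$, the Riemann mapping theorem already supplies a unique conformal isomorphism $\phi_+\colon(\Delta,0)\to(\Omega,0)$ with $\phi_+'(0)>0$, so the content of the Proposition is only the explicit shape of this map and the meaning of $\rho_0$. Viewing $\partial\Omega$ as a Jordan curve of prime ends on the sphere, it has exactly two kinds of corners: the slit tips $e^{i\phi_j}$, where the interior angle is $2\pi$, and the $m$ ``vertices at infinity'' between consecutive slits, where the removed sector containing the outgoing rays subtends an angular gap $\gamma_j\pi$; these gaps are the consecutive differences of the $\phi_j$ and they sum to $2\pi$, i.e.\ $\sum_j\gamma_j=2$.

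I would then define the candidate $\phi_+(z)=\rho_0 z\prod_{j=1}^m(1-z/z_j)^{-\gamma_j}$ with these $\gamma_j$, for distinct $z_j\in S^1$ placed in the cyclic order dictated by the $\phi_j$ and a constant $\rho_0>0$ to be fixed. Each factor $1-z/z_j$ omits $0$ on $\Delta$ and takes values in the right half-plane, so the principal powers are single-valued and $\phi_+$ is holomorphic on $\Delta$ with $\phi_+(0)=0$ and $\phi_+'(0)=\rho_0>0$. The key computation is on the boundary: writing $z=e^{i\theta}$, $z_j=e^{i\theta_j}$ and using $1-e^{i\psi}=-2i\sin(\psi/2)e^{i\psi/2}$, one gets $\arg\phi_+(e^{i\theta})=\theta-\sum_j\gamma_j\bigl(-\tfrac\pi2+\tfrac12(\theta-\theta_j)\bigr)$ on each arc of $S^1\setminus\{z_j\}$ (angles reduced to $(0,2\pi)$), whose $\theta$-derivative is $1-\tfrac12\sum_j\gamma_j=1-1=0$. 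Thus $\arg\phi_+$ is \emph{constant} on each of the $m$ arcs and jumps by $\gamma_j\pi$ across $z_j$, so every arc is mapped into a ray from the origin, with modulus tending to $+\infty$ at both endpoints since $\phi_+\sim c(z-z_j)^{-\gamma_j}$. An argument-principle count then shows $\phi_+$ attains each value of the resulting complementary slit domain exactly once; hence each arc double-covers a radial slit (its two sides, meeting at a tip at the interior critical point of $\phi_+$), and $\phi_+$ is a conformal bijection of $\Delta$ onto the complement of $m$ radial slits whose angles are the partial sums of the $\gamma_j\pi$ and whose tip radii depend on $(z_j,\rho_0)$.

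The remaining and genuinely hard step is the \emph{parameter problem}: one must choose the overall rotation, the accessory parameters $z_j$, and $\rho_0$ so that the $m$ slit directions are exactly $\phi_1,\dots,\phi_m$ and all $m$ tip radii equal $1$. The angle conditions pin down the $\gamma_j$ as stated and the cyclic placement of the $z_j$; the $m$ unit-radius conditions together with the normalization $\phi_+'(0)>0$ then determine $z_j$ and $\rho_0$. Rather than solving these transcendental equations in closed form --- which, as the paper notes, is hopeless for $m\ge3$ --- I would invoke solvability abstractly: the Riemann map of the previous paragraph exists, and the Schwarz--Christoffel theory for slit domains (see \cite{DT}) guarantees it is represented by precisely the product above for some admissible $(z_j,\rho_0)$, whereupon the boundary analysis identifies those parameters as the sought solution and uniqueness of the Riemann map finishes the $\phi_+$ statement. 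This is where essentially all of the difficulty sits, and it is exactly the point at which explicit formulas cease to exist.

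Finally, for $\phi_-$ and the identification of $\rho_0$, I would exploit inversion in the unit circle. Setting $\phi_-(z):=1/\overline{\phi_+(1/\bar z)}$ produces a conformal map of $\Delta^*$ onto $\widehat{\mathbb{C}}\setminus\bigcup_j[0,1]e^{i\phi_j}$ fixing $\infty$, because inversion carries each ray $\mathbb{R}_{\ge1}e^{i\phi_j}$ onto the segment $[0,1]e^{i\phi_j}$. Substituting the product formula and using $\bar z_j=1/z_j$ for $z_j\in S^1$ collapses this to $\phi_-(z)=\tfrac1{\rho_0}z\prod_j(1-z_j/z)^{\gamma_j}$, which is exactly the asserted form with the \emph{same} $z_j,\gamma_j$ and with $\rho_\infty=1/\rho_0$. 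Reading off the leading coefficient $\rho_\infty$ of $\phi_-$ at $\infty$ and invoking the standard characterization of the transfinite diameter (logarithmic capacity) of a compact connected set as the leading coefficient of its exterior conformal map then identifies $\rho_\infty=1/\rho_0$ with the transfinite diameter of $\bigcup_j[0,1]e^{i\phi_j}$, completing the proof.
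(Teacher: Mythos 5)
The paper offers no proof of this Proposition at all: it is stated as a well-known Schwarz--Christoffel fact, with the single remark that such formulas ``are well-known and deceptively simple (see \cite{DT})'', and the text following it only discusses how one would compute the accessory parameters. Your proposal therefore supplies genuinely more than the paper does, and in outline it is correct and is the standard verification. Your boundary computation (constant $\arg\phi_+$ on each arc of $S^1\setminus\{z_j\}$ precisely because $\sum_j\gamma_j=2$, jumps of $\gamma_j\pi$ across the prevertices, modulus blowing up at both endpoints of each arc) is the classical way to see that the candidate product maps $\Delta$ onto the complement of $m$ radial slits; your inversion trick $\phi_-(z)=1/\overline{\phi_+(1/\bar z)}$, which carries each ray $\mathbb R_{\ge1}{\rm e}^{{\rm i}\phi_j}$ to the segment $[0,1]{\rm e}^{{\rm i}\phi_j}$ and, using $\bar z_j=1/z_j$, turns $\bigl(1-z/z_j\bigr)^{-\gamma_j}$ into $\bigl(1-z_j/z\bigr)^{\gamma_j}$, is a clean and complete derivation of the $\phi_-$ statement together with $\rho_\infty=1/\rho_0$; and identifying $\rho_\infty$ with the transfinite diameter via the leading coefficient of the exterior map is the standard Fekete--Szeg\H{o} fact, for which the paper itself points to \cite{Hille}. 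Two caveats are worth flagging. First, your argument-principle step (``attains each value exactly once'') is asserted rather than executed, and it needs a little care because $|\phi_+|\to\infty$ on $\partial\Delta$: one should run the degree count on circles $|z|=r$ with $r\uparrow1$, or first transfer to the bounded picture by the same inversion you use later. Second, your resolution of the parameter problem is to cite \cite{DT} for the statement that the Riemann map of this slit domain is represented by the product --- which is essentially the Proposition itself, so this step is a citation rather than a proof (and it renders part of your second paragraph redundant); a self-contained argument would instead derive the product form from the Riemann map by Schwarz reflection of $zf'(z)/f(z)$ across the boundary arcs (it extends to a rational function with the prescribed poles), or solve the parameter problem by a continuity/degree argument. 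Since the paper disposes of exactly this point by the same citation, your proposal is no less rigorous than the paper, but you should be explicit that this is where the appeal to external theory sits.
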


The points $z_j\in S^1$ are usually referred to as accessory parameters. Here is one theoretical approach to finding these points. First, assuming we have the points $z_j$, we find the $m $ points~${z=c_j}$ which map to the points $\exp({\rm i}\phi_j)$. The derivative of $\phi_+$ vanishes at these points, $\phi_+'(c_j)=0$, hence these points $z=c_j$ satisfy
\[1=\sum_{j=1}^m \gamma_j\frac{z}{z-z_j} \]
or
\[\prod_{j=1}^m(z-z_j)=\sum_{j=1}^m \gamma_jz\prod_{i\ne j}(z-z_i) .\]
We can order the solutions so that
\[z_m<c_1<z_1<c_2<z_2<\dots <c_m<z_m\]
in reference to the orientation of the circle. Hence
\[c_j=c_j(z_1,\dots ,z_m).\]
Now we consider the $m$ equations
\[ \rho_0 c_i \prod_{j=1}^m \biggl(1-\frac{c_i}{z_j}\biggr)^{-\gamma_j}=\exp({\rm i}\phi_i),\qquad i=1,\dots ,m.\]
We can fix $z_0=1$ so that there are $m$ unknowns. Note that
in the end
\begin{equation}\label{rhoformula}\rho_0=\prod_{j=1}^m \biggl|1-\frac{c_i}{z_j}\biggr|^{\gamma_j} \end{equation}
for any $i=1,\dots ,m$.

There are alternate expressions for the transfinite diameter $\rho_{\infty}=1/\rho_0$ due to Fekete, Polya, Szego and others, see \cite[Sections~16 and~17]{Hille}. Let $E=\bigcup_{j=1}^m \mathbb [0,1]{\rm e}^{{\rm i}\phi_j}$. By definition
\[\delta_n(E)^{\binom{n}{2}}=\max_{\{w_1,\dots ,w_n\}\subset E} \prod_{1\le i<j\le n} |w_i-w_j| .\]
The basic fact is that $\delta_n \downarrow \rho_{\infty}$ as $n\uparrow \infty$. This (eventually) implies
\[\ln(\rho_0)=\lim_{n\to\infty} \min_{w_i\in E} \frac{1}{\binom{n}{2}} \sum_{1\le i<j\le n} \ln\bigl(|w_i-w_j|^{-1}\bigr) .\]
This limit can also be written as
\[=\int \int \ln\bigl(|z-w|^{-1}\bigr) {\rm d}\nu(z){\rm d}\nu(w) ,\]
where $\nu$ is the electrostatic distribution on $E$.
It is tempting to believe that for slit domains one could work this out explicitly, avoiding
the accessory parameters. This seems to not be the case, as we discuss below.

\begin{Theorem}\label{theorem2} Fix $n$.
\begin{enumerate}
\item[$(a)$] We have
\begin{gather*}
\mu_0\biggl(\bigcup_{\{{\rm e}^{{\rm i}\theta}:{\rm e}^{{\rm i}n\theta}=1\}} {\rm Loop}^1\bigl(\mathbb C^{\times}\setminus \mathbb R_{\le 1}{\rm e}^{{\rm i}\theta}\bigr)\setminus {\rm Loop}(\Delta^{*})\biggr)\\
\qquad{}=\sum_{m=1}^n (-1)^{m-1}\sum_{0\le \phi_1<\phi_2<\dots <\phi_m<2\pi}\ln(\rho_0(\gamma_1,\dots ,\gamma_m)),
\end{gather*}
where $\exp({\rm i}n\phi_j)=1$, and if $\phi_{m+1}=\phi_1$, then $\gamma_i=\phi_{i+1}-\phi_i$, $i=1,\dots ,m$.

\item[$(b)$] For any $1\le i\le m$,
\[\rho_0=\prod_{j=1}^m |z_j-c_i|^{\gamma_j}.\]
Hence also
\[\rho_0=\Biggl(\prod_{j=1}^m a_j^{\gamma_j}\Biggr)^{1/m},\qquad \text{where}\ a_j=a_j(\gamma_1,\dots ,\gamma_m)=\prod_{i=1}^m\biggl|1-\frac{c_i}{z_j}\biggr|,\]
the geometric mean of the distances from $z_j$ to the points $c_i$.
When $m=2$, $a_1(\gamma_1,\gamma_2)=2\gamma_1$ and if all the $\gamma_j=2/m$, then
$a_j(2/m,\dots ,2/m)=m(2/m)=2$.

\item[$(c)$] The sum in $(a)$ can also be written as
\[\sum_{m=1}^n(-1)^{m-1}\sum_{\gamma_1+\dots +\gamma_m=2}|\{\phi_1 \mid 0\le \phi_1<\pi\gamma_m\}|\ln(\rho_0(\gamma_1,\dots ,\gamma_m)),\]
where the $\gamma_j$ are multiples of $\frac 2n$ in the inner sum.
\end{enumerate}
\end{Theorem}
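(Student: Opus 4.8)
The plan is to treat the three parts separately, with part $(a)$ carrying the essential content and parts $(b)$ and $(c)$ following by algebra and a re-indexing argument. For part $(a)$ I would let $A_j$ denote the set of loops surrounding $0$ in $\mathbb C^{\times}\setminus[0,1]\mathrm e^{\mathrm i\phi_j}$ that are not contained in $\mathrm{Loop}(\Delta^{*})$, one for each $n$-th root of unity $\mathrm e^{\mathrm i\phi_j}$, so that the set to be measured is $\bigcup_j A_j$. Since each relevant set will turn out to have finite measure, ordinary finite inclusion/exclusion applies and gives $\mu_0\bigl(\bigcup_j A_j\bigr)=\sum_{\emptyset\neq S}(-1)^{|S|-1}\mu_0\bigl(\bigcap_{j\in S}A_j\bigr)$. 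The crux is to evaluate one intersection over an $m$-element set $S$. First I would record the topological fact that a loop surrounding $0$ which avoids a radial segment $[0,1]\mathrm e^{\mathrm i\phi_j}$ must also enclose its outer endpoint $\mathrm e^{\mathrm i\phi_j}$; hence the loops in $\bigcap_{j\in S}A_j$ are exactly those surrounding the star $E=\bigcup_{j\in S}[0,1]\mathrm e^{\mathrm i\phi_j}$ inside $\Omega=\widehat{\mathbb C}\setminus E$ and not lying in $\mathrm{Loop}(\Delta^{*})$. To match the hypotheses of Theorem \ref{Werner} I would apply the inversion $z\mapsto 1/z$, under which $\mu_0$ is invariant (conformal invariance), $\Delta^{*}$ becomes $\Delta$, and $\Omega$ becomes a slit-to-infinity domain $V=\mathbb C\setminus\bigcup_{j\in S}\mathbb R_{\ge 1}\mathrm e^{-\mathrm i\phi_j}$ of the type in the Proposition, with the same gap data $\gamma$. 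Then Theorem \ref{Werner} with $U=\Delta$ and $\phi=\phi_+$ yields $\mu_0\bigl(\bigcap_{j\in S}A_j\bigr)=\log\bigl|\phi_+'(0)\bigr|=\log\rho_0(\gamma_1,\dots,\gamma_m)$, since $\phi_+'(0)=\rho_0>0$. Grouping the subsets $S$ by cardinality produces the formula in $(a)$.

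For part $(b)$ everything follows from the explicit form of $\phi_+$. Because $\phi_+'(c_i)=0$ and $\phi_+(c_i)=\mathrm e^{\mathrm i\phi_i}$ has modulus $1$ with $|c_i|=1$, reading $|\phi_+(c_i)|=1$ off the product formula gives $\rho_0=\prod_{j=1}^m|1-c_i/z_j|^{\gamma_j}$, which is $(\ref{rhoformula})$; using $|z_j|=1$ to rewrite $|1-c_i/z_j|=|z_j-c_i|$ gives the first identity of $(b)$, valid for every $i$. Taking the product of these $m$ identities (one per $i$) and extracting an $m$-th root exchanges the order of the double product and yields $\rho_0=\bigl(\prod_j a_j^{\gamma_j}\bigr)^{1/m}$ with $a_j=\prod_i|1-c_i/z_j|$. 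The two special evaluations are then direct: in the symmetric configuration $\gamma_j=2/m$ one has $z_j=z_0\omega^{\,j}$ and $c_i=z_0\zeta_i$ with $\omega=\mathrm e^{2\pi\mathrm i/m}$ and $\zeta_i^m=-1$, so that $\prod_{\zeta^m=-1}(1-\zeta)=1^m+1=2$ and hence $a_j=2$; and the value $a_1(\gamma_1,\gamma_2)=2\gamma_1$ comes from the explicit solution of the two-slit accessory-parameter problem, which (as noted after the Proposition) is analytically tractable.

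For part $(c)$ I would use that $\rho_0(\gamma_1,\dots,\gamma_m)$ depends only on the cyclic gap data and re-index the inner subset sum. Each subset $\{\phi_1<\dots<\phi_m\}$ of the $n$-th roots of unity determines, reading gaps from its minimal element $\phi_1$, a sequence $(\gamma_1,\dots,\gamma_m)$ with $\sum_j\gamma_j=2$, each $\gamma_j$ a multiple of $2/n$; conversely a gap sequence together with a minimal angle $\phi_1$ recovers the subset precisely when $\phi_m=\phi_1+\pi(\gamma_1+\dots+\gamma_{m-1})<2\pi$, that is when $0\le\phi_1<\pi\gamma_m$. This is a bijection between subsets and pairs (gap sequence, admissible $\phi_1$), so summing $\log\rho_0$ over subsets equals summing $|\{\phi_1:0\le\phi_1<\pi\gamma_m\}|\,\log\rho_0(\gamma_1,\dots,\gamma_m)$ over gap sequences, which is the identity in $(c)$.

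The main obstacle I anticipate is part $(a)$: the clean identification of each inclusion/exclusion term requires both the topological claim that a loop around $0$ avoiding a radial slit must enclose the whole slit, and the correct transport---via inversion and conformal invariance of $\mu_0$---of the ``not contained in $\Delta^{*}$'' configuration into the hypotheses of Theorem \ref{Werner}. One must also confirm that each term is finite so that the finite inclusion/exclusion identity is literally valid, but this is automatic once the term is identified with $\log\rho_0<\infty$. By comparison, part $(b)$ is bookkeeping with the product formula and part $(c)$ is a counting bijection.
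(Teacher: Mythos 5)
Your proposal is correct and follows essentially the same route as the paper's (very terse) proof: part (a) is exactly the paper's ``inclusion/exclusion principle, Werner's formula, and the form of $\phi_+$'', part (b) is the identity (\ref{rhoformula}) read off from $|\phi_+(c_i)|=1$, and part (c) is the same gap-sequence/$\phi_1$ counting bijection the paper carries out. The only differences are that you spell out steps the paper leaves implicit --- the identification of each intersection with loops surrounding the star $E$, the transport by inversion into the hypotheses of Theorem \ref{Werner}, the finiteness needed for inclusion/exclusion, and the explicit verification of the $m=2$ and symmetric special cases in (b) --- all of which are accurate.
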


\begin{proof} The formula in (a) follows from the inclusion/exclusion principle, Werner's formula,
and the form of $\phi_+$.

Part (b) follows from the discussion preceding the statement of the theorem, see (\ref{rhoformula}).

(c) Given the $\phi_j$ satisfying
\[0\le \phi_1<\dots <\phi_m<2\pi, \qquad {\rm e}^{{\rm i}n\phi_j}=1, \]
by definition the $\gamma_j$ are multiples of $\frac 2n$ and satisfy $\sum_{j=1}^m\gamma_j=2$. Conversely,
suppose we are given the $\gamma_j$. Then we can solve $\phi_2=\pi\gamma_1+\phi_1$, $\phi_3=\pi(\gamma_1+\gamma_2)+\phi_1$, \dots, and \[\phi_m=\pi(\gamma_1+\dots +\gamma_{m-1})+\phi_1=2\pi-\pi\gamma_m+\phi_1,\]
$\phi_1$ is not determined, but it must satisfy the inequality
\[2\pi-\pi\gamma_m+\phi_1<2\pi \qquad \text{or} \qquad \phi_1<\pi \gamma_m.\]
We have to count the number of possible $\phi_1$, as we have done in the second formula. Suppose that the largest $\phi_1=\frac{2\pi k}{n}$. Then
\[ \frac{2 k}{n}<\gamma_m \qquad \text{or} \qquad k<\frac{n\gamma_m}{2} .\]
This implies (c).
\end{proof}

\subsection{Some examples}

The equations for the accessory parameters can be solved in the cases $m=1,2$, but otherwise they seem quite intractable.

Suppose that the slit domain is $\mathbb C \setminus [0,\infty)$, i.e., $m=1$ and $\phi_1=0$. In this case,
\[\phi_+(z)=4\frac{z}{1+z^2},\]
 which is essentially the Koebe function.

From this, one can derive other explicit examples by using the elementary fact that if $f=z\bigl(1+\sum_{n\ge 1}u_nz^n\bigr)$ is a univalent function on the disk, then
\begin{equation}\label{principle}g(z)=f(z^n)^{1/n}\end{equation} is also univalent function.
Thus using the Koebe function, if the $\phi_j$ are the $m$-th roots of unity, then
\[\phi_+(z)=2^{2/m}z(1+z^m)^{-2/m}, \]
and hence \smash{$\rho_{\infty}=\bigl(\frac{1}{2}\bigr)^{2/m}$}.

Suppose that $m=2$ and $\gamma_1=\gamma$, $\gamma_2=2-\gamma$. In this case, one can
solve the equations of the previous section to find
\begin{equation}\label{m=2}
\rho_0=4\Bigl(\frac{\gamma}{2}\Bigr)^{\frac{\gamma}{2}}\Bigl(1-\frac{\gamma}{2}\Bigr)^{1-\frac{\gamma}{2}},
\end{equation}
which is reminiscent of entropy. Using (\ref{principle}), this leads to an infinite number of other explicit examples. From these examples one can check that $\rho_0$ is not in general a symmetric function of the $\gamma_j$.

The expression in part (b) of Theorem \ref{theorem2} is a generalization
of the entropy like formula~(\ref{m=2}); unfortunately the $a_j$ in part (b) are not easy to find.

Suppose that $m=3$, $\phi_0=0$, $\phi_1=\pi \gamma$, $\phi_2=2\pi-\pi\gamma$, hence $\gamma_1=\gamma_3=\gamma$, $\gamma_2=2-2\gamma$.

In this case, $c_1=1$ and $z_2=-1$. If $z_1=\exp({\rm i} t)$, then using the formula for the derivative of~$\phi_+$,
\[{\rm Re}(c_2)=\cos(t)-\gamma(1+\cos(t)). \]
Now
\[\phi_+(z)=\rho_0 z(1+z)^{-(2-2\gamma)}\Bigl(1-\frac{z}{z_1}\Bigr)^{-\gamma}\Bigl(1-\frac{z}{\overline{z_1}}\Bigr)^{-\gamma} .\]
Using $\phi_+(1)=1$, we obtain (after simplifying)
\[\rho_0=2^{2-\gamma} (1-\cos(t))^{\gamma} .\]
To find $z_1=\exp({\rm i}t)$, we have to solve for $t$ in
\[\phi_+(c_2(t))=\exp({\rm i}\pi \gamma),\]
i.e., we have to solve for $\cos(t)$ using the equation
\[2^{2-\gamma} (1-\cos(t))^{\gamma} c_2\bigl(1+c_2^{-1}\bigr)^{2-2\gamma}\bigl(1-2\cos(t)c_2^{-1}+c_2^{-2}\bigr)^{\gamma}=\exp({\rm i}\pi\gamma). \]
If $x=\cos(t)$, then
\begin{align*}
\rho_0&{}=2^{2-\gamma}(1-x)^{\gamma}=|1+c_2|^{2(1-\gamma)}\bigl|c_2^2-2c_2x+1\bigr|^{\gamma}\\
&{}=2(1+x)\gamma^{\gamma}(1-\gamma)^{1-\gamma}=2^{2-\gamma}(1-x)^{\gamma}.
\end{align*}
The function \smash{$y=\frac{(1-x)^{\gamma}}{1+x}$} decreases monotonically from 1 to 0 as $x$ increases from 0 to 1.
So there is a well-defined inverse.

The function $\gamma \to 2^{\gamma-1}\gamma^{\gamma}(1-\gamma)^{1-\gamma}$, $0\le \gamma\le 1$, starts at 1/2,
decreases to near zero, then increases to 1. In any event it has values between 0 and 1.
We can and do simply write
\[x=x\bigl(2^{\gamma-1}\gamma^{\gamma}(1-\gamma)^{1-\gamma}\bigr).\]

\begin{Proposition}We have
\[\rho_0(\gamma,\gamma,2-2\gamma)=2(1+x)\gamma^{\gamma}(1-\gamma)^{1-\gamma},\]
where
\[x=x\bigl(2^{\gamma-1}\gamma^{\gamma}(1-\gamma)^{1-\gamma}\bigr).\]
\end{Proposition}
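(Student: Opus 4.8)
The plan is to exploit the reflection symmetry of the slit configuration to reduce the accessory-parameter problem to a single real unknown, and then to read off $\rho_0$ from the normalizations at the two independent critical points $c_1,c_2$, without ever solving the transcendental accessory equations. Since the angles $0,\pi\gamma,2\pi-\pi\gamma$ are symmetric about the real axis, the slit set is invariant under $z\mapsto\bar z$; as $\psi(z):=\overline{\phi_+(\bar z)}$ is then a normalized conformal isomorphism onto the same domain, uniqueness forces $\overline{\phi_+(\bar z)}=\phi_+(z)$. First I would use this to justify the symmetric configuration $z_1={\rm e}^{{\rm i}t}$, $z_3={\rm e}^{-{\rm i}t}$, $z_2=-1$, $c_1=1$, and $c_3=\overline{c_2}$ (the single real accessory parameter is $-1$ since $c_1=1$ already accounts for the positive real axis), leaving $x:=\cos t$ as the only unknown.

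Next I would produce two expressions for $\rho_0$ as functions of $x$. The first comes from $\phi_+(c_1)=\phi_+(1)=1$: substituting into $\phi_+(z)=\rho_0 z(1+z)^{-(2-2\gamma)}(1-z/z_1)^{-\gamma}(1-z/\overline{z_1})^{-\gamma}$ and using $|1-{\rm e}^{-{\rm i}t}|^2=2(1-\cos t)$ gives $\rho_0=2^{2-\gamma}(1-x)^{\gamma}$. The second uses the identity $\rho_0=\prod_{j}|1-c_2/z_j|^{\gamma_j}$ from (\ref{rhoformula}) at $i=2$, which needs only ${\rm Re}(c_2)$. I would obtain this from Vieta's relation for the cubic $N(z)=\prod_j(z-z_j)-z\sum_j\gamma_j\prod_{i\ne j}(z-z_i)$ whose roots are the $c_i$: comparing $z^2$ coefficients gives $\sum_i c_i=\sum_j(1-\gamma_j)z_j$, so with $c_1=1$ and $c_2+c_3=2{\rm Re}(c_2)$ one finds ${\rm Re}(c_2)=\cos t-\gamma(1+\cos t)$. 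Using $|c_2|=1$ the two factors then collapse: $|1+c_2|^2=2(1+{\rm Re}(c_2))=2(1-\gamma)(1+x)$, while $c_2^2-2c_2x+1=2c_2({\rm Re}(c_2)-x)$ has modulus $2\gamma(1+x)$; hence $\rho_0=|1+c_2|^{2-2\gamma}\,|c_2^2-2c_2x+1|^{\gamma}=[2(1-\gamma)(1+x)]^{1-\gamma}[2\gamma(1+x)]^{\gamma}=2(1+x)\gamma^\gamma(1-\gamma)^{1-\gamma}$, the asserted formula.

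Finally I would determine $x$ by equating the two expressions: $2^{2-\gamma}(1-x)^\gamma=2(1+x)\gamma^\gamma(1-\gamma)^{1-\gamma}$, i.e. $\frac{(1-x)^\gamma}{1+x}=2^{\gamma-1}\gamma^\gamma(1-\gamma)^{1-\gamma}$. Since the left side decreases monotonically from $1$ to $0$ as $x$ runs over $[0,1]$ and the right side lies in $(0,1)$ (as recorded just before the statement), this has a unique root $x=x\bigl(2^{\gamma-1}\gamma^\gamma(1-\gamma)^{1-\gamma}\bigr)$, which completes the proof.

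The main obstacle is bypassing the full, intractable system of accessory-parameter equations. The decisive point is that (\ref{rhoformula}) involves only the \emph{modulus} of $\phi_+(c_2)$ — equal to $1$ because every critical point maps to a slit tip on $S^1$ — so that nothing about $c_2$ is needed beyond ${\rm Re}(c_2)$ and $|c_2|=1$; Vieta supplies the former and the unit-circle constraint makes both factors elementary in $x$. In writing this up I would be careful to justify the reflection symmetry that legitimizes the ansatz, and to check that the resulting $x$ lies in $(0,1)$, so that $c_2$ occupies the correct arc and the monotone inverse is well defined.
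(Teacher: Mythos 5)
Your proposal is correct and takes essentially the same route as the paper: the symmetric ansatz $c_1=1$, $z_2=-1$, $z_1=\overline{z_3}={\rm e}^{{\rm i}t}$, the relation ${\rm Re}(c_2)=x-\gamma(1+x)$, the two evaluations $\rho_0=2^{2-\gamma}(1-x)^{\gamma}$ from $\phi_+(1)=1$ and $\rho_0=|1+c_2|^{2-2\gamma}\bigl|c_2^2-2xc_2+1\bigr|^{\gamma}=2(1+x)\gamma^{\gamma}(1-\gamma)^{1-\gamma}$ from $|\phi_+(c_2)|=1$, then equating and inverting by monotonicity of $(1-x)^{\gamma}/(1+x)$. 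The only differences are presentational: you justify the reflection symmetry explicitly and obtain ${\rm Re}(c_2)$ via Vieta applied to the critical-point polynomial, which is precisely the paper's appeal to ``the formula for the derivative of $\phi_+$.''
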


This is a formula which is not very enlightening. It illustrates that one is better off using a~numerical method.

\section{Numerically testing Cardy's formula}\label{numerical}

We now want to use the formula in part~(a) of Theorem \ref{theorem2} to test Cardy's formula.
The~alternating sum
\begin{equation*}
\sum_{m=1}^n (-1)^{m-1}\sum_{0\le \phi_1<\phi_2<\dots <\phi_m<2\pi}\ln(\rho_0(\gamma_1,\dots ,\gamma_m)),
\end{equation*}
where $\exp({\rm i}n\phi_j)=1$, is an increasing function of $n$, and the question is whether it converges to something
less than \smash{$\frac{2\pi}{\sqrt{3}}$}, consistent with Cardy's conjecture, or if the sequence eventually exceeds
this number.

The number of terms in this sum grows very rapidly, and the individual terms are relatively large.
In the first version of this paper, we proposed a number of approximations (e.g., using an entropy like expression
for $\rho_0$) which alternately suggested the sum converges and diverges. More recently, we discovered the SC package
of Driscoll and Trefethen (see~\cite{DT}). Using this package, we have been able to compute the alternating sum up to $n=22$
using the diskmap method from the SC toolbox. The results of this process for $n=1$ to $n=22$ are shown in Figure~\ref{fig1}.

\begin{figure}[t]\centering
\includegraphics[scale = 0.7]{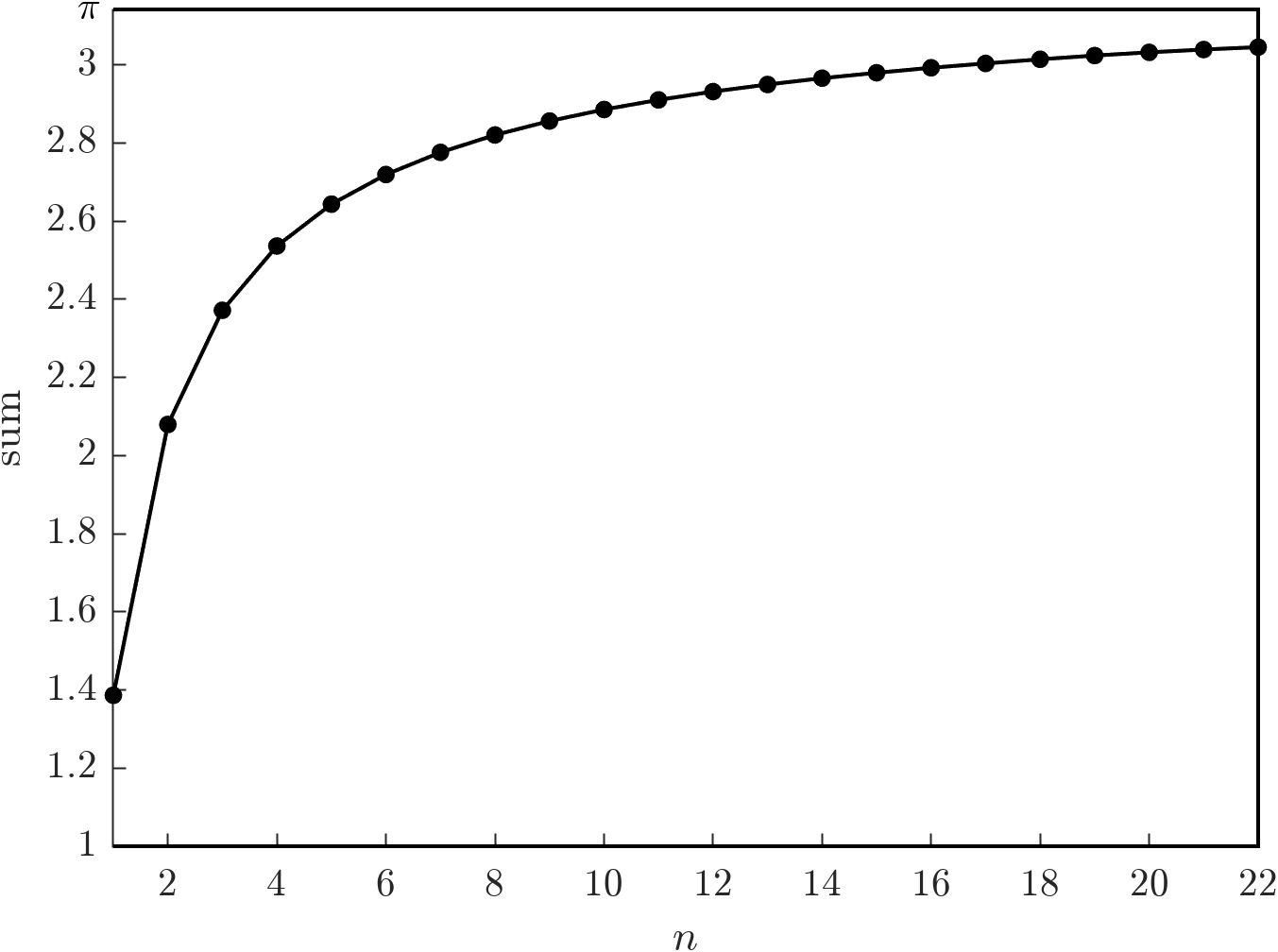}
\caption{The last 5 outputted values are 3.0140, 3.0236, 3.0318, 3.0390, and 3.0451. }\label{fig1}
\end{figure}

Although far from conclusive, it seems plausible that the sum is converging to something approaching $\pi$.

\section{Integrability of a height function}\label{Sigma}

Suppose that $\gamma\in{\rm Loop}^1(\mathbb C^{\times})$. By the
Jordan curve theorem, the complement of $\gamma$ in $\mathbb C
\cup\{\infty\}$ has two connected components, $U_{\pm}$, so that
\[\mathbb C \cup \{\infty\}=U_+ \sqcup \gamma \sqcup U_-,\]
where $0\in U_+$
and $\infty \in U_-$. There are based conformal isomorphisms
\[\phi_+\colon \ (\Delta,0) \to (U_+,0),\qquad \phi_-\colon \ (\Delta^*,\infty) \to (U_-,\infty).\]
The map $\phi_-$ can be uniquely determined by normalizing the
Laurent expansion in $\vert z\vert>1$ to be of the form
\begin{equation*}
\phi_-(z)=\rho_{\infty}(\gamma)L(z),\qquad L(z)=z\biggl(1+\sum_{n\ge 1}b_nz^{-n}\biggr),
\end{equation*}
where $\rho_{\infty}(\gamma)>0$ is the
transfinite diameter (see \cite[Sections 16 and 17]{Hille} for numerous formulas for
$\rho_{\infty}$). The map $\phi_+$ can be similarly uniquely
determined by normalizing its Taylor expansion to be of the form
\begin{equation*}
\phi_+(z)=\rho_0(\gamma)u(z),\qquad u(z)=z\biggl(1+\sum_{n\ge 1}u_nz^{n}\biggr),
\end{equation*}
where $\rho_0(\gamma)>0$ is called the conformal radius with respect to
$0$. By a theorem of Carath\'{e}odory (see \cite[Theorem 17.5.3]{Hille}),
both $\phi_{\pm}$ extend uniquely to homeomorphisms
of the closures of their domain and target. This
implies that the restrictions $\phi_{\pm}\colon S^1 \to \gamma$ are
topological isomorphisms. Thus there is a well-defined welding map
\begin{equation*}
W\colon \ {\rm Loop}^1(\mathbb C^{\times}) \to
\bigl\{\sigma\in {\rm Homeo}^+\bigl(S^1\bigr)\mid \sigma=lau\bigr\} \times \mathbb R^+, \qquad \gamma
\mapsto (\sigma(\gamma),\rho_{\infty}(\gamma)),
\end{equation*}
where
\[
\sigma(\gamma,z):=\phi_-^{-1}(\phi_+(z))=lau,\qquad a(\gamma)=\frac{\rho_0(\gamma)}{\rho_{\infty}(\gamma)}
\]
and $l$ is the inverse mapping for $L$. In these `coordinates'
\begin{equation}\label{normalization2}
{\rm d}\mu_0(\gamma)= {\rm d}\nu_0(\sigma) \times \frac{{\rm d}\rho_{\infty}}{\rho_{\infty}},
\end{equation}
where $\nu_0$ is a probability measure, see \cite[Proposition~2.1]{CP} and Section~\ref{normalization} below.

Let $H(\gamma)=-\log(a)\ge 0$. This can be viewed as a height function on universal Teichm\"uller space.
There are various expressions for this function, e.g.,
\begin{align*}
H(\gamma)&{}=\log(\rho_{\infty})-\log(\rho_0)\\
&{}=\log\biggl(1+\sum_{n=1}^{\infty}(n+1)|u_n|^2\biggr)-\log\biggl(1+\sum_{m=1}^{\infty}(m-1)|b_m|^2\biggr)
\end{align*}
and it is inversion invariant (see \cite[Section 1]{CP}).

As observed in \cite[Section 7]{CP},
\begin{equation*}
{\rm Loop}^1(\{1<\vert z\vert<{\rm e}^{\rho}\})\subset\{1\le \rho_0\le
\rho_{\infty}\le {\rm e}^{\rho}\}\subset {\rm Loop}^1\biggl(\biggl\{\frac14<\vert
z\vert<4{\rm e}^{\rho}\biggr\}\biggr)
\end{equation*}
and as a consequence
\begin{equation*}
F_0(\rho)\le \int_0^{\rho}\nu_0(H\le x){\rm d}x\le
F_0(\log(16)+\rho).
\end{equation*}
Using Werner's formula, this implies
\[ \int_0^{\rho}\nu_0(H\ge x){\rm d}x\le \rho-F_0(\rho) \]
and taking the limit as $\rho\uparrow \infty$
\[ \int H {\rm d}\nu_0 \le \mu_0\bigl(\bigl\{\gamma\mid \gamma \cap S^1\ne \phi\bigr\}\bigr).\]
Thus we have the following assertion.

\begin{Theorem}Assuming Cardy's formula,
\[ \frac{2\pi}{\sqrt{3}}-\ln(16)\le \int H {\rm d}\nu_0 \le \frac{2\pi}{\sqrt{3}}\]
and in particular $H$ is integrable.
\end{Theorem}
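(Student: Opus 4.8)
The plan is to assemble the two-sided sandwich established just above the statement and then pass to the limit $\rho\to\infty$, reading off the constant term of Cardy's asymptotic expansion on each side. No new geometric input is needed beyond the inclusions already recorded; the theorem is essentially a packaging of the preceding inequalities together with Corollary \ref{cardycor}.

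First, the upper bound. I would take the inequality $\int H\,{\rm d}\nu_0\le \mu_0(\{\gamma\mid\gamma\cap S^1\ne\phi\})$, already derived from Werner's formula and the lower inclusion ${\rm Loop}^1(\{1<|z|<{\rm e}^{\rho}\})\subset\{1\le\rho_0\le\rho_{\infty}\le{\rm e}^{\rho}\}$, and simply substitute the value of the right-hand side supplied by Cardy's formula (Corollary \ref{cardycor} with $r=1$), namely $\frac{2\pi}{\sqrt{3}}$. This gives $\int H\,{\rm d}\nu_0\le\frac{2\pi}{\sqrt{3}}$ with no further work, and in particular $H$ is integrable because $H\ge 0$.

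For the lower bound I would use the other half of the sandwich, $\int_0^{\rho}\nu_0(H\le x)\,{\rm d}x\le F_0(\log(16)+\rho)$. Since $\nu_0$ is a probability measure, $\nu_0(H\le x)=1-\nu_0(H>x)$, so this reads $\rho-\int_0^{\rho}\nu_0(H>x)\,{\rm d}x\le F_0(\log(16)+\rho)$, i.e.
\[\int_0^{\rho}\nu_0(H>x)\,{\rm d}x\ge \rho-F_0(\log(16)+\rho).\]
The left side increases in $\rho$ and, by the layer-cake formula for the nonnegative function $H$, converges to $\int H\,{\rm d}\nu_0$. The right side I would evaluate with the Poisson-summation asymptotic $F_0(s)=s-\frac{2\pi}{\sqrt{3}}+o(1/s)$: writing $s=\log(16)+\rho$ gives $\rho-F_0(\log(16)+\rho)\to\frac{2\pi}{\sqrt{3}}-\log(16)$. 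Passing to the limit in the displayed inequality then yields $\int H\,{\rm d}\nu_0\ge\frac{2\pi}{\sqrt{3}}-\log(16)$.

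The only genuine subtlety, rather than an obstacle, is bookkeeping in the limit: one must check that $\int_0^{\rho}\nu_0(H>x)\,{\rm d}x$ is monotone in $\rho$ so that its limit exists as the improper integral $\int H\,{\rm d}\nu_0$ (using $H\ge 0$), and that the inequality is preserved in the limit because the right-hand side converges to a finite constant. The $\log(16)$ gap between the two bounds is exactly the price of the factor $4$ in the outer inclusion $\{1\le\rho_0\le\rho_{\infty}\le{\rm e}^{\rho}\}\subset{\rm Loop}^1(\{\frac14<|z|<4{\rm e}^{\rho}\})$, contributing $\log(4)$ at each end of the annulus; it is not an error term, and closing it would require a sharper comparison between the radii $\rho_0,\rho_{\infty}$ and the moduli of the smallest and largest round annuli framing $\gamma$.
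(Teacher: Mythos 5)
Your proposal is correct and follows essentially the same route as the paper: both sides come from the sandwich $F_0(\rho)\le \int_0^{\rho}\nu_0(H\le x)\,{\rm d}x\le F_0(\log(16)+\rho)$, combined with the Cardy-implied limit $\rho-F_0(\rho)\to \frac{2\pi}{\sqrt{3}}$ (equivalently Corollary~\ref{cardycor}) and monotone passage to the limit via the layer-cake representation of $\int H\,{\rm d}\nu_0$. The lower bound, which the paper leaves implicit after stating the upper bound, is filled in by you exactly as intended.
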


In \cite[Section 7]{CP}, the second author conjectured that there is an exact formula
$\nu_0(H\le x)=\exp(-\beta_0/x)$, where $\beta_0=2\pi^2\cdot (1/8)$ (the $1/8$ is a critical exponent from logarithmic conformal field theory). This is not compatible with Cardy's conjecture. The reason is that Cardy's formula for $F_0(\rho)$ does not have a log term in its asymptotic expansion as $\rho\uparrow \infty$, whereas $\int_0^{\rho}\nu(H\le x){\rm d}x$ does have a log term.

\section{Normalizations of Werner's family of measures}\label{normalization}

Werner's family of measures is unique up to a positive constant.
There are several ways to normalize the family. The first is using Werner's formula
Theorem \ref{Werner}. A second is to assume $\nu_0$ in (\ref{normalization2}) is a probability measure (this
is the normalization adopted in \cite{CP}).
A third is to assume that $F_0(\rho)$ is asymptotically $\rho$, which is implicit in Cardy's formula.

\begin{Theorem} These normalizations are all the same.
\end{Theorem}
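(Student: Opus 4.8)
The plan is to use the fact that Werner's family is unique up to a positive multiplicative constant: each of the three normalizations is a single linear constraint that pins down this constant, so it suffices to fix the Werner-normalized measure $\mu_0$ (normalization~1, i.e.\ the constant on the right of Theorem~\ref{Werner} is $1$) and to check that it \emph{already} satisfies the other two conditions. Since scaling $\mu_0\mapsto c\mu_0$ scales $F_0\mapsto cF_0$ and $\nu_0\mapsto c\nu_0$, the whole statement reduces to two assertions about this reference measure: (A) $F_0(\rho)\sim\rho$ as $\rho\to\infty$, and (B) $\nu_0$ has total mass $1$.

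For (A), I would apply Theorem~\ref{Werner} with $U=\Delta$, $V=e^{\rho}\Delta$, $\phi(z)=e^{\rho}z$, which gives exactly $\mu_0(\{\gamma\in{\rm Loop}^1((e^{\rho}\Delta)^{\times})\mid\gamma\not\subset\Delta\})=\rho$. This set splits as a \emph{disjoint} union of the annular loops, of measure $F_0(\rho)$, and the loops meeting $S^1$ while staying inside $e^{\rho}\Delta$, of measure $C(\rho):=\mu_0(\{\gamma\in{\rm Loop}^1((e^{\rho}\Delta)^{\times})\mid\gamma\cap S^1\neq\phi\})$ (a connected loop surrounding $0$, not contained in $\Delta$, and disjoint from $S^1$ must lie in the annulus). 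Hence $\rho=F_0(\rho)+C(\rho)$, so $F_0(\rho)\le\rho$, and $F_0(\rho)\sim\rho$ is equivalent to $C(\rho)=o(\rho)$. Since $C(\rho)$ increases to $\mu_0(\{\gamma\in{\rm Loop}^1(\mathbb C^{\times})\mid\gamma\cap S^1\neq\phi\})=\frac{2\pi}{\sqrt3}<\infty$ by Corollary~\ref{cardycor}, we get $C(\rho)=O(1)=o(\rho)$ and therefore $F_0(\rho)=\rho-O(1)\sim\rho$.

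For (B), I would combine the disintegration (\ref{normalization2}) with the sandwich recorded in Section~\ref{Sigma}. Writing $\rho_0=e^{-H}\rho_{\infty}$ with $H=H(\sigma)\ge0$ a function of the welding alone, the set $S_\rho:=\{1\le\rho_0\le\rho_{\infty}\le e^{\rho}\}$ satisfies
\[
\mu_0(S_\rho)=\int d\nu_0(\sigma)\int_{e^{H(\sigma)}}^{e^{\rho}}\frac{d\rho_{\infty}}{\rho_{\infty}}=\int_{\{H\le\rho\}}(\rho-H)\,d\nu_0=\rho\,\nu_0(\{H\le\rho\})-\int_{\{H\le\rho\}}H\,d\nu_0 .
\]
One first notes $\nu_0$ is finite: from $(\rho/2)\,\nu_0(\{H\le\rho/2\})\le\mu_0(S_\rho)\le F_0(\rho+\log16)\le\rho+\log16$ the total mass is at most $2$. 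Then $H\mathbf 1_{\{H\le\rho\}}/\rho\le1$ and $\to0$ pointwise, so dominated convergence gives $\tfrac1\rho\int_{\{H\le\rho\}}H\,d\nu_0\to0$; hence $\mu_0(S_\rho)\sim\rho\cdot(\text{total mass of }\nu_0)$. On the other hand the inclusions of Section~\ref{Sigma} give $F_0(\rho)\le\mu_0(S_\rho)\le F_0(\rho+\log16)$, and by (A) both sides are asymptotic to $\rho$. Comparing, the total mass of $\nu_0$ is $1$, which is normalization~2.

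The main obstacle is the single analytic input $C(\rho)=o(\rho)$ in (A); everything else is bookkeeping with Werner's formula, the disintegration, and the elementary sandwich. I would supply it via the finiteness of the $S^1$-crossing measure (Corollary~\ref{cardycor}). If one wished to keep this normalization step independent of Cardy's formula, the weaker claim that the crossing measure grows sublinearly would already suffice, and one could attempt to extract it from the superadditivity $F_0(\rho_1+\rho_2)\ge F_0(\rho_1)+F_0(\rho_2)$ (which forces $F_0(\rho)/\rho$ to converge to some $L\le1$) together with an independent sublinear bound on the measure of loops that stretch from inside $S^1$ out to large radius; establishing such a bound directly is the delicate point.
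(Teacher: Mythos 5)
Your opening reduction (each normalization fixes the single free constant, so it suffices to anchor at one and verify the other two) is sound, and your step (B) is correct -- it is essentially the paper's sandwich argument, using the disintegration (\ref{normalization2}) and the Koebe inclusions, run in the reverse direction, and your dominated-convergence handling of $\int_{\{H\le\rho\}}H\,{\rm d}\nu_0$ is careful. The genuine gap is step (A), and it is fatal as proposed: you obtain $C(\rho)=o(\rho)$ by citing Corollary~\ref{cardycor}, but that corollary is explicitly conditional on Cardy's formula, whereas the theorem you are proving is unconditional in the paper -- and must be, since it is precisely what makes the paper's numerical \emph{test} of Cardy's conjecture non-circular. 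Worse, the appeal is circular on its own terms: Corollary~\ref{cardycor} is derived in Section 1 by combining Werner's formula ($\rho=F_0(\rho)+C(\rho)$ for the Werner-normalized measure) with the asymptotics $F_0(\rho)=\rho-\frac{2\pi}{\sqrt{3}}+o(1/\rho)$ read off from Cardy's expression (\ref{cardyformula}); but Cardy's exact formula carries its own normalization, fixed by its leading term $\rho$, and identifying it with the Werner-normalized $F_0$ is exactly the equivalence of normalizations 1 and 3 that is to be proved. Your closing paragraph concedes the unconditional substitute is missing: superadditivity only gives $F_0(\rho)/\rho\to L\le 1$, and nothing you propose forces $L=1$.

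The paper escapes this trap by anchoring at normalization 2 rather than normalization 1: assume $\nu_0$ is a probability measure and insert an unknown constant $c_W$ into Theorem~\ref{Werner}. Then the sandwich ${\rm Loop}^1(\{1<|z|<{\rm e}^{\rho}\})\subset\{1\le\rho_0\le\rho_{\infty}\le {\rm e}^{\rho}\}\subset{\rm Loop}^1\bigl(\bigl\{\frac14<|z|<4{\rm e}^{\rho}\bigr\}\bigr)$ (Koebe quarter theorem) gives $F_0(\rho)\le\int_0^{\rho}\nu_0(H\le x)\,{\rm d}x\le F_0(\rho+\ln 16)$, and since $\nu_0$ has total mass one this alone pins down the growth of $F_0$, yielding (\ref{ineq}) and hence $F_0(\rho)$ asymptotic to $\rho$: no control of the crossing measure is needed, because the mass of $\nu_0$ is now a hypothesis rather than a conclusion. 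The remaining identity $c_W=1$ then follows by evaluating $\mu_0\bigl({\rm Loop}^1(\{0<|z|<{\rm e}^{\rho}\})\setminus{\rm Loop}(\Delta)\bigr)=c_W\rho$ via Werner's formula and also as $\lim_{\epsilon\downarrow 0}\bigl(F_0(\rho-\ln\epsilon)-F_0(-\ln\epsilon)\bigr)$, and comparing with (\ref{ineq}). So the asymmetry you ran into is real: deducing normalization 3 from normalization 1 requires the hard analytic input you could not supply, while deducing it from normalization 2 is elementary. To repair your outline, keep your step (B) machinery but start from the probability normalization and prove statements (A) and ($c_W=1$) from it, as the paper does.
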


\begin{proof}(This is a slight modification of the proof of \cite[Proposition 7.5]{CP}.) As in \cite{CP} (and as we assumed in the previous section), assume that $\nu_0$ is a probability
measure. This means that we must insert a constant $c_W$ into Theorem~\ref{Werner} (as we did in \cite{CP}).
Thus if $\gamma$ is
a loop which surrounds $\Delta$,
\[\mu_0\bigl({\rm Loop}^1(U_+\setminus\{0\})\setminus {\rm Loop}(\Delta)\bigr)=c_W
\log(\rho_0(\gamma)).\]
We must show $c_W=1$ and $F_0(\rho)$ is asymptotically
$\rho$.

As in \cite[Proposition 7.5]{CP}, on the one hand
\[{\rm Loop}^1(\{1<\vert z\vert<{\rm e}^{\rho}\})\subset {\rm Loop}^1(\{\vert z\vert<{\rm e}^{\rho}\})\setminus {\rm Loop}(\Delta) .\]
On the other hand,
\[
{\rm Loop}^1(\{1<\vert z\vert<{\rm e}^{\rho}\})\subset\{1\le \rho_0\le \rho_{\infty}\le {\rm e}^{\rho}\}\subset {\rm Loop}^1\biggl(\biggl\{\frac14<\vert
z\vert<4{\rm e}^{\rho}\biggr\}\biggr),
\]
where the last inclusion uses Koebe's quarter theorem. Therefore,
\[F_0(\rho)\le \int_0^{\rho}\nu_0({\rm e}^{-x}\le a\le 1){\rm d}x\le
F_0(\log(16)+\rho) .\]
Since $\nu_0$ is a probability measure, for $\rho\gg 1$
\begin{equation}
\label{ineq}\rho-\ln(16)\le F_0(\rho)\le \rho .\end{equation}
Thus $F_0(\rho)$ is asymptotically $\rho$.

Werner's formula implies
\[
\mu_0\bigl({\rm Loop}^1(\{0<|z|<{\rm e}^{\rho}\})\setminus {\rm Loop}(\Delta)\bigr)=c_W\rho.
\]
This can also be written as
\[\lim_{\epsilon\downarrow 0}\left(F_0(\rho-\ln(\epsilon))-F_0(-\ln(\epsilon))\right). \]
Together with (\ref{ineq}) this implies (for $\rho\gg 1\gg \epsilon$)
\[\rho-\ln(\epsilon)-\ln(16)-(-\ln(\epsilon))\le
c_W\rho \le \rho-\ln(\epsilon)-(-\ln(\epsilon)-\ln(16)) \]
and
\[\rho-\ln(16)\le c_W\rho \le \rho+\ln(16) .\]
This implies $c_W=1$.
\end{proof}

\subsection*{Acknowledgements} We thank Toby Driscoll for help with using the SC package for slit domains, and the referees
for comments which improved the exposition.


\pdfbookmark[1]{References}{ref}
\LastPageEnding

\end{document}